\documentclass[a4, 12pt]{amsart}
\usepackage[mathscr]{eucal}
\usepackage{amssymb}
\usepackage{latexsym}
\usepackage{amsthm}
\theoremstyle{plain}

\setlength{\textwidth}{15cm}
\setlength{\textheight}{22.5cm}
\setlength{\evensidemargin}{3mm}
\setlength{\oddsidemargin}{3mm}

\usepackage{graphics, color}

\usepackage{epsfig}

\usepackage{graphicx}
\usepackage{color}

\usepackage{graphicx,color}
\usepackage{amsmath, amssymb, graphics}

\usepackage{graphicx}

\theoremstyle{plain}
\newtheorem{theorem}{Theorem}[section]

\newtheorem{remark}{Remark}[section]
\newtheorem{lemma}{Lemma}[section]
\newtheorem{example}{Example}[section]
\newtheorem{definition}{Definition}[section]

\setlength{\textwidth}{15cm}
\setlength{\textheight}{22.5cm}
\setlength{\evensidemargin}{3mm}
\setlength{\oddsidemargin}{3mm}
\makeatletter
\@addtoreset{equation}{section}

\title[Compact embedded $\lambda$-torus]
{Compact embedded $\lambda$-torus in Euclidean spaces}
\author{Qing-Ming Cheng and  Guoxin Wei}
\address{Qing-Ming Cheng \\ Department of Applied Mathematics, Faculty of Sciences,
Fukuoka  University, 814-0180, Fukuoka,  Japan, cheng@fukuoka-u.ac.jp}
\address{Guoxin Wei \\  School of Mathematical Sciences, South China Normal University,
510631, Guangzhou,  China, weiguoxin@tsinghua.org.cn}

\begin{document}
\maketitle

\begin{abstract}
\noindent In this paper, we construct compact embedded $\lambda$-hypersurfaces
with the topology of torus which are called $\lambda$-torus in Euclidean spaces $\mathbb {R}^{n+1}$.
\end{abstract}

\footnotetext{ 2010 \textit{ Mathematics Subject Classification}: 53C44, 53C42.}

\footnotetext{{\it Key words and phrases}: the weighted area functional, embedded $\lambda$-hypersurfaces, $\lambda$-torus.}

\footnotetext{The first author was partially  supported by JSPS Grant-in-Aid for Scientific Research (B): No. 24340013
and Challenging Exploratory Research No. 25610016.
The second author was partly supported by grant No. 11371150 of NSFC.}

\section {Introduction}

\noindent
Let $X: M\rightarrow \mathbb{R}^{n+1}$ be an  $n$-dimensional smooth immersed hypersurface in the $(n+1)$-dimensional
Euclidean space $\mathbb{R}^{n+1}$. In \cite{cw2}, Cheng and Wei have introduced the
weighted volume-preserving mean curvature flow, which is defined as the following:
a family $X(t)$ of smooth immersions:
$$
X(t)=X(\cdot, t):M\to  \mathbb{R}^{n+1}
$$
with  $X(0)=X(\cdot, 0)=X(\cdot)$ is called {\it a weighted volume-preserving mean curvature flow} if they satisfy
\begin{equation}
\dfrac{\partial X(t)}{\partial t}=-\alpha(t) N(t) +\mathbf{H}(t),
\end{equation}
 where
$$
\alpha(t) =\dfrac{\int_MH(t)\langle N(t), N\rangle e^{-\frac{|X|^2}2}d\mu}{\int_M\langle N(t), N\rangle e^{-\frac{|X|^2}2}d\mu},
$$
 $\mathbf{H}(t)=\mathbf{H}(\cdot,t)$ and $N(t)$ denote the mean curvature vector  and the  unit normal vector of hypersurface
 $M_t=X(M^n,t)$ at point $X(\cdot, t)$, respectively  and   $N$ is the  unit normal vector of $X:M\to  \mathbb{R}^{n+1}$.\\
One  can prove  that the flow (1.1) preserves the weighted volume $V(t)$ defined by
$$
V(t)=\int_M\langle X(t),N\rangle e^{-\frac{|X|^2}{2}}d\mu.
$$
\noindent
{\it The weighted area functional}
$A:(-\varepsilon,\varepsilon)\rightarrow\mathbb{R}$ is defined by
$$
A(t)=\int_Me^{-\frac{|X(t)|^2}{2}}d\mu_t,
$$
where $d\mu_t$ is the area element of $M$ in the metric induced by $X(t)$.

\noindent
Let $X(t):M\rightarrow \mathbb{R}^{n+1}$ with $X(0)=X$ be a  variation of $X$.
If $V(t)$ is constant for any $t$, we call  $X(t):M\rightarrow \mathbb{R}^{n+1}$ {\it  a weighted volume-preserving variation of $X$}.
Cheng and Wei \cite{cw2} have proved that  $X:M\rightarrow \mathbb{R}^{n+1}$ is a critical point of the weighted area functional $A(t)$
for all weighted volume-preserving variations if and only  if there exists constant $\lambda$ such that
\begin{equation}\label{eq:10-3-1}
\langle X, N\rangle +H=\lambda.
\end{equation}
An immersed hypersurface $X(t):M\rightarrow \mathbb{R}^{n+1}$ is called {\it a $\lambda$-hypersurface of
weighted volume-preserving mean curvature flow} if the equation (1.2) is satisfied.

\begin{remark}
If $\lambda=0$,  $\lambda$-hypersurfaces are   self-shrinkers of mean curvature flow. Hence,  $\lambda$-hypersurfaces are
a generalization of self-shrinkers of the mean curvature flow. For research on self-shrinkers of mean curvature flow, see \cite{CL, CW, CM, DX,  H2}.
\end{remark}
\begin{example}
The $n$-dimensional sphere $S^n(r)$ with radius $r>0$ is a compact $\lambda$-hypersurface in $\mathbb{R}^{n+1}$
with $\lambda=\frac{n}r-r$.
\end{example}
\begin{example}
For $1\leq k\leq n-1$, the $n$-dimensional cylinder  $S^k(r)\times \mathbb{R}^{n-k}$ with radius $r>0$ is a complete
and non-compact $\lambda$-hypersurface in $\mathbb{R}^{n+1}$ with $\lambda=\frac{k}r-r$.
\end{example}
\begin{example}
The $n$-dimensional Euclidean space $\mathbb{R}^{n}$ is a complete and non-compact $\lambda$-hypersurface
in $\mathbb{R}^{n+1}$ with $\lambda=0$.
\end{example}

\vskip 1mm
\noindent
It is well known that many interesting  examples of compact self-shrinker of mean curvature flow are found recently.
For instance,  Angenent  \cite{[A]} has constructed compact embedded self-shrinker
$$
X:S^1\times S^{n-1}\to\mathbb{R}^{n+1}.
$$
Drugan \cite{D} has discovered self-shrinker, which is a topological sphere
$$
X:S^2\to\mathbb{R}^{3}
$$
and complete self-shrinkers with higher genus in $\mathbb{R}^{3}$ are constructed by
Kapouleas, Kleene and M{\o}ller \cite{KKM} (see  \cite{KM} also) and Nguyen \cite{N1}-\cite{N3}.

\noindent
Our purpose in this paper is to construct compact embedded $\lambda$-hypersurfaces in $\mathbb{R}^{n+1}$.

\begin{theorem}
For $n\geq 2$ and any $\lambda>0$, there exists an embedded rotational $\lambda$-hypersurface
\begin{equation}
X: M\rightarrow \mathbb{R}^{n+1},
\end{equation}
which is called $\lambda$-torus.
\end{theorem}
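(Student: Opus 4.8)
The plan is to realize the $\lambda$-torus as a hypersurface of revolution, reducing equation (1.2) to an ODE for its generating curve. Fix coordinates $(x,y)\in\mathbb{R}\times\mathbb{R}^{n}$ on $\mathbb{R}^{n+1}$ and look for $X$ obtained by rotating a unit-speed curve $\gamma(s)=(x(s),r(s))$, $r(s)>0$, about the $x$-axis. Writing $x'=\cos\theta$, $r'=\sin\theta$, the meridian contributes the principal curvature $\theta'$ and the rotation contributes $n-1$ equal principal curvatures $-\cos\theta/r$; with the sign convention for which $S^{n}(\rho)$ is a $\lambda$-hypersurface with $\lambda=n/\rho-\rho$ one then has $H=\theta'-(n-1)\cos\theta/r$ and $\langle X,N\rangle=r\cos\theta-x\sin\theta$, so that (1.2) becomes the first-order system
\begin{equation}\label{prof}
x'=\cos\theta,\qquad r'=\sin\theta,\qquad \theta'=\lambda+(n-1)\frac{\cos\theta}{r}+x\sin\theta-r\cos\theta
\end{equation}
on $\{r>0\}$; as a check, $S^{n}(\rho)$ with $\rho^{2}+\lambda\rho-n=0$ solves \eqref{prof}, and \eqref{prof} has no equilibria. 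A smooth closed solution curve of \eqref{prof} contained in $\{r>0\}$ becomes, under rotation, a smooth compact $\lambda$-hypersurface diffeomorphic to $S^{1}\times S^{n-1}$, embedded as soon as the curve is — this will be the $\lambda$-torus.

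The key structural feature is that the reflection $R:(x,r,\theta)\mapsto(-x,r,-\theta)$ is a reversing symmetry of \eqref{prof}, i.e. it sends the vector field to its negative. Hence $s\mapsto R(\gamma(-s))$ is again a solution, every solution through a point of $\mathrm{Fix}(R)=\{x=0,\ \theta\in\{0,\pi\}\}$ is time-symmetric about that instant, and any solution meeting $\mathrm{Fix}(R)$ twice is periodic. I would therefore, for each $r_{0}>0$, take the solution $\gamma_{r_{0}}$ with $\gamma_{r_{0}}(0)=(0,r_{0},0)$ — a perpendicular crossing of the symmetry hyperplane $\{x=0\}$ — and follow it to its first return to $\{x=0\}$ at time $s_{1}(r_{0})$, with return angle $\Phi(r_{0})=\theta(s_{1}(r_{0}))\in[\pi/2,3\pi/2]$. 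If $\Phi(r_{0}^{\ast})=\pi$ for some $r_{0}^{\ast}$, then $\gamma_{r_{0}^{\ast}}$ meets $\mathrm{Fix}(R)$ at $s=0$ and at $s=s_{1}$, hence is periodic, and the $R$-symmetry forces its generating curve to close up $C^{\infty}$; since $r>0$ throughout, this yields an immersed $\lambda$-torus.

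The existence of such an $r_{0}^{\ast}$ is the heart of the proof, and I expect it to be the main obstacle. One must first get a priori control: for $r_{0}$ in a suitable interval $(a,b)$ the orbit should stay in a fixed compact part of $\{r>0\}$ and return to $\{x=0\}$ in finite time, so that $\Phi$ is defined and continuous there. The mechanisms are visible in \eqref{prof}: the term $(n-1)\cos\theta/r$ acts as a barrier against $r\to0^{+}$ (this is exactly where $n\ge2$ is needed), while $-r\cos\theta$ confines large $r$; it is convenient to pass to $\theta$ as independent variable, $dx/d\theta=\cos\theta/\theta'$, $dr/d\theta=\sin\theta/\theta'$, and to exhibit a region in the $(x,r)$-plane invariant over a half-turn of $\theta$, with the sign $\lambda>0$ entering here. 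Comparing the two ends of $(a,b)$ — near $a$, where the barrier term turns the curve sharply away from the axis before it heads back, against near $b$, where the confinement dominates — one shows that $\Phi-\pi$ changes sign, and the intermediate value theorem supplies $r_{0}^{\ast}$. Carrying out these estimates quantitatively is the real work; should it prove awkward, an alternative is to continue the self-shrinker torus ($\lambda=0$, \cite{[A]}) in $\lambda$ by the implicit function theorem, but that requires a non-degeneracy input that is not obvious either.

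Finally, the $\lambda$-torus is embedded once $\gamma_{r_{0}^{\ast}}$ is a simple closed curve. As the full generating curve is the half-arc $s\in[0,s_{1}]$ together with its $R$-reflection, and these meet only on $\{x=0\}$, it suffices that the half-arc be a simple arc. I would obtain this from $\theta'>0$ along the trajectory up to $s_{1}$ — the same monotonicity that keeps $\Phi$ under control — which makes $r$ monotone on the half-arc, exhibits it as a graph $x=f(r)$, and in fact makes the whole generating curve convex, with total turning $2\pi$; such a curve in the open half-plane $\{r>0\}$ is embedded and encloses no point of the rotation axis, so its rotation is an embedded $S^{1}\times S^{n-1}$, which is the required $\lambda$-torus.
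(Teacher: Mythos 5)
Your reduction, ODE, initial data, and closing mechanism all coincide with the paper's: your system is exactly the paper's equation (2.10) rewritten in terms of the turning angle $\theta$ (so that $\kappa=\theta'$), your perpendicular crossing $(0,r_0,0)$ is the paper's initial value $(x_\delta(0),r_\delta(0),x_\delta'(0))=(0,\delta,1)$, and the paper closes the curve by reflecting $\gamma_{\delta^*}([0,s_1])$ in the $r$-axis, which is your reversing-symmetry argument in different words. The problem is that the step you yourself flag as ``the heart of the proof'' --- the existence of $r_0^{\ast}$ with $\Phi(r_0^{\ast})=\pi$ --- is not carried out. You do not specify the shooting interval $(a,b)$, do not prove that $\Phi$ is defined and continuous on it (which already requires showing the orbit stays in a fixed compact subset of $\{r>0\}$ and returns to $\{x=0\}$ in finite time), and do not establish the sign of $\Phi-\pi$ at either end; you only name the mechanisms you expect to be responsible. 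Since the reversibility formalism and the embeddedness discussion are routine, these estimates are essentially the entire mathematical content of the theorem, so as it stands this is a correct strategy rather than a proof.

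For comparison, here is how the paper fills exactly this hole. For small $\delta$ it rescales by $\delta$ to show that the curve turns by almost $\pi/2$ in time $O(\delta)$ and then climbs nearly parallel to the $r$-axis; combining this with the identity $\kappa=xr'+\bigl(\frac{n-1}{r}-r\bigr)x'+\lambda$, which gives $\kappa\ge\lambda$ on any arc where the first two terms are nonnegative, it converts the total-turning bound $\int\kappa\,ds\le\pi/2$ into $x_\delta\le C_1\delta$ and $r_\delta(s_1)\le\sqrt{n-1}+\pi/(2\lambda)$, and concludes that for small $\delta$ the orbit hits the $r$-axis before its tangent horizontalizes, i.e.\ $\Phi(\delta)<\pi$ (Lemmas 3.1 and 3.2). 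Rather than exhibiting a $\delta$ with $\Phi(\delta)>\pi$, it then takes $\delta^{*}$ to be the supremum of the set of $\delta$ for which the curve returns to the $r$-axis in finite time, and proves the uniform bounds $\sup_{\delta<\delta^{*}}r_\delta(s_1)<\infty$ and $x_\delta\le\pi/(2\lambda)$ so that $\gamma_{\delta^{*}}$ survives in $\{r>0\}$, still ends on the $r$-axis, and can only do so horizontally. Two further points of contrast: $\lambda>0$ enters through the comparison $\int\kappa\,ds\ge\lambda\cdot(\mbox{length})$ in every one of these bounds, not through an invariant region over a half-turn of $\theta$ as you suggest; and your embeddedness step needs only $r'>0$ on $(0,s_1)$, which is automatic from the definition of $s_1$ as the first horizontal tangent, rather than the stronger and not obviously true assertion that $\theta'>0$ throughout (the term $\bigl(\frac{n-1}{r}-r\bigr)x'$ is negative once $x'<0$ and $r<\sqrt{n-1}$). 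Until you supply quantitative estimates of the above kind, the existence of $r_0^{\ast}$ remains an unproved claim.
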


\section {Equations of rotational $\lambda$-hypersurfaces in $\mathbb{R}^{n+1}$}

\noindent
Let $\gamma(s)=(x(s), r(s))$, $s\in (a,b)$ be a curve with $r>0$ in
the upper half plane $\mathbb{H}=\{x+ir| \  r>0, \  x\in \mathbb{R}, \ i=\sqrt{-1}\}$,
where $s$ is arc length parameter of $\gamma(s)$.
We consider  a rotational hypersurface $X: (a,b)\times S^{n-1}(1) \hookrightarrow \mathbb{R}^{n+1}$
in $\mathbb{R}^{n+1}$ defined by
\begin{equation}\label{eq:10-30-1}
X:  (a,b)\times S^{n-1}(1)\hookrightarrow \mathbb{R}^{n+1}, \ \ X(s, \alpha)=(x(s), r(s)\alpha) \in \mathbb{R}^{n+1}
\end{equation}
where  $S^{n-1}(1)$ is the ($n-1$)-dimensional unit sphere (cf. \cite{[DD]}).

\noindent
By a direct calculation, one has the unit normal vector
\begin{equation}\label{eq:10-30-2}
N=(-r^{\prime}, x^{\prime}\alpha)
\end{equation}
and the mean curvature
\begin{equation}\label{eq:10-30-3}
H=-x^{\prime\prime}r^{\prime}+x^{\prime}r^{\prime\prime}-\dfrac{n-1}{r}x^{\prime}.
\end{equation}
Therefore, we know from \eqref{eq:10-30-1} and \eqref{eq:10-30-2}

\begin{equation}\label{eq:10-30-4}
\langle X, N\rangle=-xr^{\prime}+rx^{\prime}.
\end{equation}
Hence,  $X:  (a,b)\times S^{n-1}(1)\hookrightarrow \mathbb{R}^{n+1}$ is a $\lambda$-hypersurface in $\mathbb{R}^{n+1}$,
if and only if,  from \eqref{eq:10-3-1}, \eqref{eq:10-30-3} and \eqref{eq:10-30-4},
\begin{equation}\label{eq:9-16-11}
-x^{\prime\prime}r^{\prime}+x^{\prime}r^{\prime\prime}-\dfrac{n-1}{r}x^{\prime}-xr^{\prime}+rx^{\prime}=\lambda.
\end{equation}
Since $s$ is arc length parameter of the profile curve $\gamma(s)=(x(s), r(s))$, we have
\begin{equation}\label{eq:9-4-11}
(x^{\prime})^2+(r^{\prime})^2=1,
\end{equation}
Thus, it follows that
\begin{equation}\label{eq:9-16-12}
x^{\prime}x^{\prime\prime}+r^{\prime}r^{\prime\prime}=0.
\end{equation}
The signed curvature $\kappa(s)$ of the profile curve $\gamma(s)=(x(s), r(s))$ is given by
\begin{equation}\label{eq:12-4-2}
\kappa(s)=-\dfrac{x^{\prime\prime}}{r^{\prime}},
\end{equation}
and it is known that the integral of the signed curvature $\kappa(s)$ measures the total rotation of the tangent vector of
 $\gamma(s)$.
From \eqref{eq:9-16-11} and \eqref{eq:9-16-12}, one has
\begin{equation}\label{eq:9-4-2}
x^{\prime\prime}=-r^{\prime}\bigl[xr^{\prime}+\bigl(\dfrac{n-1}{r}-r\bigl)x^{\prime}+\lambda\bigl].
\end{equation}
Hence, we have

\begin{equation}\label{eq:9-4-3}
  {\begin{cases}
     (x^{\prime})^2+(r^{\prime})^2=1\\[2mm]

     -\dfrac{x^{\prime\prime}}{r^{\prime}}=xr^{\prime}+\bigl(\dfrac{n-1}{r}-r\bigl)x^{\prime}+\lambda.
     \end{cases}}
  \end{equation}

  \vskip 1mm

 \noindent First of all, we consider several special solutions of \eqref{eq:9-4-3}.

 \vskip 1mm
\begin{enumerate}
\item  $(x,r)=(0, s)$ is a solution. \\
This curve corresponds to the  hyperplane through $(0, 0)$ and $\lambda=0$.\vskip 1mm

\item $(x,r)=(a\cos\frac{s}{a}, a\sin\frac{s}{a})$ is a solution, where $a=\dfrac{\sqrt{\lambda^2+4n}-\lambda}{2}$. \\
This circle $x^2+r^2=a^2$ corresponds to a sphere $S^n(a)$ with radius $a$.

\item  $(x,r)=(-s, a)$ is a solution, where $a=\dfrac{\sqrt{\lambda^2+4(n-1)}-\lambda}{2}$.\\
This straight line corresponds to  a cylinder $S^{n-1}(a)\times \mathbb{R}$.\vskip 1mm
\end{enumerate}

\noindent
Next, we consider to find several other solutions of \eqref{eq:9-4-3} besides the above three special solutions.
In fact, our purposes are to study   properties of the profile curve and
to find a simple and closed profile curve $\gamma$ in the upper half plane
$\mathbb{H}=\{x+ir |\ r>0, \  x \in \mathbb{R},  i=\sqrt{-1}\}$.

\noindent
From now,  we consider general behavior of profile curve $\gamma$.
As long as $x>0$, $x^{\prime}>0$ and $r<\dfrac{\sqrt{\lambda^2+4(n-1)}+\lambda}{2}$, one has from \eqref{eq:12-4-2} and \eqref{eq:9-4-3} that
 \begin{equation*}
 \kappa=-\dfrac{x^{\prime\prime}}{r^{\prime}}=xr^{\prime}+\bigl(\dfrac{n-1}{r}-r\bigl)x^{\prime}+\lambda>0,
 \end{equation*}
 that is, the curve $\gamma$ bends upwards, so that the curve will be convex.

 \noindent
At $s=0$,
the profile curve  $\gamma_\delta(s)=(x_\delta(s), r_\delta(s))$ with
$(x_{\delta}(0), r_{\delta}(0))=(0, \delta)$ and the initial unit tangent vector
$(x_{\delta}^{\prime}(0), r_{\delta}^{\prime}(0))=(1, 0)$
will initially bend upwards, where
$\delta<\dfrac{\sqrt{\lambda^2+4(n-1)}+\lambda}{2}$.
 We give the following  definition of $s_1$.

\begin{definition}\label{definition 2.1}

Let $s_1=s_1(\delta)>0$ be the arc length of the first time, if any, at which either $x_\delta=0$ or the unit tangent vector  is $(1, 0)$, or
$(-1,0)$, that is, either the curve $\gamma_\delta$ hits $r$-axis, or the curve $\gamma_\delta$ has a horizontal tangent. If this never happens, we take $s_1(\delta)=S(\delta)$, where $\gamma_\delta=(x_\delta,r_\delta): [0, S(\delta))\rightarrow \mathbb{R}^2 $ is the maximal solution of \eqref{eq:9-4-3} with initial value $(x_\delta(0),r_\delta(0),x_\delta^{\prime}(0))=(0, \delta, 1)$.

\end{definition}

\noindent
From this definition \ref{definition 2.1}, we have $r^{\prime}(s)>0$ in  $(0, s_1)$.
Hence, this curve can be written as a  graph of $x=f_\delta(r)$,
where $\delta<r<r_\delta(s_1)$. If $f_\delta^{\prime}(r)=\frac{dx}{dr}=0$, i.e.,
the profile curve $\gamma_{\delta}$ has a vertical tangent, then it follows from \eqref{eq:9-4-3} that

\begin{equation}
\aligned
 f_\delta^{\prime\prime}(r)&=\dfrac{d^2f_\delta(r)}{dr^2}
 =-\frac{1}{(r^{\prime})^3}[xr^{\prime}+(\dfrac{n-1}{r}-r)x^{\prime}+\lambda]\\
&=-\dfrac{1}{(r^{\prime})^3}(xr^{\prime}+\lambda)<0.
\endaligned
\end{equation}
This means that $f_\delta(r)$ can only have local maximum. Thus, $f_\delta(r)$ can have at most one critical point, which must be a
maximum  point.

\noindent Next, we shall prove that there exist $\delta>0$
such that $\gamma_\delta([0,s_1(\delta)])$ is a simple curve
in the first quadrant which begins and ends on the $r$-axis,
and whose tangent vectors on the $r$-axis are horizontal.
From \eqref{eq:9-4-3}, one can get that the profile curve $\gamma$
obtained by reflecting $\gamma_\delta([0,s_1(\delta)])$ in the $r$-axis is a simple and closed curve in the upper half plane.

\section {An estimate on upper bounds of $r_\delta(s_1)$}

\noindent
We will consider behavior of profile curve $\gamma$ as $\delta>0$ is small enough in order to estimate supremum of $r_\delta(s_1)$.
Since $\delta$ is very small, we define

\begin{equation}\label{eq:10-19-2}
  {\begin{cases}
      \xi (t)=\dfrac{1}{\delta}x(\delta t)\\[2mm]
      \rho(t)=\dfrac{1}{\delta}(r(\delta t)-\delta).
     \end{cases}}
  \end{equation}
From \eqref{eq:9-4-3}, we have
\begin{equation}\label{eq:9-6-11}
  {\begin{cases}
      (\xi^{\prime})^2&+\ (\rho^{\prime})^2=1\\[2mm]
      \dfrac{\xi^{\prime\prime}}{- \rho^{\prime}}&=\dfrac{\xi^{\prime\prime}}{-\sqrt{1-(\xi^{\prime})^2}}\\
      &=\delta^2\xi\rho^{\prime}+\biggl(\dfrac{n-1}{1+\rho}-\delta^2(1+\rho)\biggl)\xi^{\prime}+\lambda \delta\\
      &=\dfrac{n-1}{1+\rho}\xi^{\prime}+\lambda \delta+O(\delta^2)
     \end{cases}}
  \end{equation}
and
\begin{equation}
\xi (0)=0, \ \ \rho(0)=0, \ \ \xi^{\prime}(0)=1.
\end{equation}

\noindent
We consider  equations
\begin{equation}\label{eq:12-4-3}
  {\begin{cases}
      (\xi^{\prime})^2+(\rho^{\prime})^2=1\\[2mm]
      \dfrac{\xi^{\prime\prime}}{- \rho^{\prime}}=\dfrac{\xi^{\prime\prime}}{-\sqrt{1-(\xi^{\prime})^2}}=\dfrac{n-1}{1+\rho}\xi^{\prime},
     \end{cases}}
  \end{equation}
with
\begin{equation}
\xi (0)=0, \ \ \rho(0)=0, \ \ \xi^{\prime}(0)=1.
\end{equation}
From \eqref{eq:12-4-3}, one gets
$$
1-(\rho^{\prime})^2=\dfrac1{(1+\rho)^{2(n-1)}}.
$$
If $\rho(t)  \rightarrow +\infty$ as $t \rightarrow+\infty$, we have
$$
\rho^{\prime}=\dfrac{d\rho}{dt}=\dfrac{dr}{ds} \rightarrow 1.
$$
If $\rho(t)$ is bounded, we have
\begin{equation}
c_1\leq \dfrac{\xi^{\prime\prime}}{-\xi^{\prime}\sqrt{1-(\xi^{\prime})^2}}=\dfrac{n-1}{1+\rho}\leq(n-1)
\end{equation}
where $c_1>0$ is a  constant. By a direct calculation, we   get
\begin{equation}\label{eq:12-4-4}
\tanh (c_1 t)\leq \sqrt{1-(\xi^{\prime})^2}=\rho^{\prime}.
\end{equation}
Hence, ${\rho^{\prime}}  \rightarrow 1$ as  $t\rightarrow +\infty$.

\noindent
Since the solution of \eqref{eq:9-6-11} depends smoothly on the parameter $\delta$, we may obtain from \eqref{eq:12-4-4} that
 there is a $T>0$ such that for  all sufficiently  small $\delta>0$ , one has $T \delta<S(\delta)$ and at $s=T\delta$,

\begin{equation}\label{eq:10-6-1}
  r_{\delta}^{\prime}(T\delta)\geq
  {\begin{cases}
      \dfrac{\sqrt{3}}{2},\ \ \ \ \ \ \ \ \ \ \ \ \ \ \ \ \ \ \ \ \ \ \ \  \ {\mbox{if}} \ \lambda>\dfrac{\pi}{3\sqrt{n-1}}    \\[2mm]
      \sin(\dfrac{\pi}{2}-\lambda\dfrac{\sqrt{n-1}}{2}),\ \ \ \ \  \ {\mbox{if}} \ \lambda\leq \dfrac{\pi}{3\sqrt{n-1}} .
     \end{cases}}
  \end{equation}
and $x_\delta=O(\delta)$, $r_\delta=\delta+O(\delta)$ from \eqref{eq:10-19-2}.

\begin{lemma}\label{lemma 2}
For $0<s<s_1$, we have
\begin{equation}
x_\delta(s)\leq C_1 \delta,\  r_\delta(s)\leq \sqrt{n-1}+\frac{\pi}{2\lambda}  \   {\mbox{and}}  \ s_1(\delta)<\infty
\end{equation}
if $\delta$ is small enough, where  constant $C_1$ does not depend on $\delta$.
\end{lemma}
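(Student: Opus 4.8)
The plan is to reparametrise the profile curve by its turning angle $\theta(s)$, writing $(x',r')=(\cos\theta,\sin\theta)$ with $\theta(0)=0$, so that \eqref{eq:9-4-3} becomes $r'=\sin\theta$ and $\theta'=\kappa=x\sin\theta+(\frac{n-1}{r}-r)\cos\theta+\lambda$. The whole argument then traces how $\theta$ sweeps from $0$ toward $\pi$ while $r$ climbs and $x$ stays pinned near zero. First I would feed in the rescaling estimate behind \eqref{eq:10-6-1}: for all small $\delta>0$ one has $T\delta<S(\delta)$ and, on $[0,T\delta]$, $x_\delta=O(\delta)$, $r_\delta=\delta+O(\delta)$, and $\sin\theta(T\delta)=r_\delta'(T\delta)\ge c_0$, where $c_0=\frac{\sqrt3}{2}$ when $\lambda>\frac{\pi}{3\sqrt{n-1}}$ and $c_0=\sin(\frac\pi2-\frac{\lambda\sqrt{n-1}}{2})$ otherwise; in particular the curve has not yet met the $r$-axis or turned horizontal, so $T\delta<s_1(\delta)$, and since $r_\delta$ is tiny there (hence below $R_0:=\frac{\lambda+\sqrt{\lambda^2+4(n-1)}}{2}$) we have $\kappa>0$ while $\theta<\frac\pi2$, so $\theta$ is increasing on $(0,T\delta)$.

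Let $s_*$ be the first instant with $\theta=\frac\pi2$. While $\theta\le\frac\pi2$ and $r\le\sqrt{n-1}$ every term of $\kappa$ is nonnegative, so $\kappa\ge\lambda$ and $\frac{dr}{d\theta}=\frac{\sin\theta}{\kappa}\le\frac1\lambda$; the two values of $c_0$ in \eqref{eq:10-6-1} are calibrated precisely so that, in either regime of $\lambda$, integrating this gives $r_\delta(s_*)-r_\delta(T\delta)<\frac{\sqrt{n-1}}{2}$, hence $r_\delta(s_*)<\sqrt{n-1}$ once $\delta$ is small. Next I would show $\kappa>0$ on all of $(s_*,s_1)$, so $\theta$ increases strictly there and stays in $(\frac\pi2,\pi)$ until $s_1$: $\kappa(s_*)=x_\delta(s_*)+\lambda>0$, and if $\bar s\in(s_*,s_1)$ were the first zero of $\kappa$, then $\theta(\bar s)\in(\frac\pi2,\pi)$ and $\kappa'(\bar s)\le0$, whereas differentiating $\kappa$ and using $\theta'(\bar s)=0$, $x'=\cos\theta$, $r'=\sin\theta$ gives $\kappa'(\bar s)=-\frac{(n-1)\sin\theta(\bar s)\cos\theta(\bar s)}{r(\bar s)^2}>0$, a contradiction. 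Hence $x_\delta'=\cos\theta$ is $\ge0$ on $(0,s_*)$ and $\le0$ on $(s_*,s_1)$, so $\sup_{(0,s_1)}x_\delta=x_\delta(s_*)$, while $r_\delta$ is increasing throughout $(0,s_1)$. For the bound on $r_\delta$: if $r_\delta$ reaches $\sqrt{n-1}$, first at some $s_3$, then on $(s_3,s_1)$ we have $r>\sqrt{n-1}$, $\theta\in(\frac\pi2,\pi)$, so $(\frac{n-1}{r}-r)\cos\theta\ge0$ and $\kappa\ge\lambda$; as $\theta$ then runs over a subinterval of $(\frac\pi2,\pi)$ of length $<\frac\pi2$, we get $s_1-s_3<\frac{\pi}{2\lambda}$ and $r_\delta-\sqrt{n-1}=\int\sin\theta\,ds<\frac{\pi}{2\lambda}$; if $r_\delta$ never reaches $\sqrt{n-1}$ the estimate is automatic. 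The same dichotomy gives $s_1<\infty$: either $s_1<s_3+\frac{\pi}{2\lambda}$, or $r_\delta$ stays $\le\sqrt{n-1}$ and $\theta\to\theta_\infty\le\pi$, but $\theta_\infty<\pi$ forces $r'\to\sin\theta_\infty>0$, so $r_\delta\to\infty$, a contradiction, while $\theta_\infty\in(\frac\pi2,\pi]$ forces $x_\delta'=\cos\theta$ to be eventually bounded away from $0$ and negative, so $x_\delta$ reaches $0$ in finite time; in every case $s_1<\infty$.

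It remains to bound $x_\delta(s_*)=x_\delta(T\delta)+\int_{\theta(T\delta)}^{\pi/2}\frac{\cos\theta}{\kappa}\,d\theta$. On $(T\delta,s_*)$ the radius $r_\delta$ is small, so $\frac{n-1}{r}-r\ge\frac{n-1}{2r}$, whence $\kappa\ge\frac{n-1}{2r}\cos\theta$ and $\frac{\cos\theta}{\kappa}\le\frac{2r_\delta}{n-1}$; away from $\theta=\frac\pi2$ this already makes the integrand $O(\delta)$, and near $\theta=\frac\pi2$ one patches with the crude bound $\kappa\ge\lambda$, the net effect being $\int_{\theta(T\delta)}^{\pi/2}\frac{\cos\theta}{\kappa}\,d\theta=O(r_\delta(T\delta))=O(\delta)$, so $x_\delta\le x_\delta(s_*)\le C_1\delta$. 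I expect this final estimate to be the main obstacle: at the vertical tangent both $\cos\theta$ and the centrifugal lower bound $\frac{n-1}{r}\cos\theta$ for $\kappa$ degenerate together, so controlling the integral uniformly in $\delta$ — equivalently, quantifying how slowly $r_\delta$ grows across that boundary layer — requires a careful comparison with the rescaled model of Section 3; the monotonicity of $\theta$ after $s_*$, via the sign of $\kappa'$ at its critical points, is the other place that needs a little care.
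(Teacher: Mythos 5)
Your turning-angle setup is sound and most of the lemma goes through: the calibration of \eqref{eq:10-6-1} giving $r_\delta(s_*)<\sqrt{n-1}$ at the vertical tangent, the positivity of $\kappa$ after $s_*$ via the sign of $\kappa'$ at a putative first zero (a point the paper's proof actually glosses over), the bound $r_\delta(s_1)\leq\sqrt{n-1}+\frac{\pi}{2\lambda}$, and the finiteness of $s_1$ all match the paper's curvature-integral arguments in substance. But the central claim $x_\delta\leq C_1\delta$ is not established, and your own sketch of it fails. The inequality $\frac{n-1}{r}-r\geq\frac{n-1}{2r}$ requires $r\leq\sqrt{(n-1)/2}$, which you have not shown (you only have $r<\sqrt{n-1}$ up to $s_*$). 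More seriously, the bound $\frac{\cos\theta}{\kappa}\leq\frac{2r_\delta}{n-1}$ integrated over $\theta\in(\theta(T\delta),\frac\pi2)$ only yields $O(\sup r_\delta)$, and $r_\delta$ is \emph{not} $O(\delta)$ on $(T\delta,s_*)$ --- it may climb to order $\frac{\sqrt{n-1}}{2}$ there. Your proposed patching does not rescue this: from $\frac{dr}{d\theta}\leq\frac{2r}{n-1}\tan\theta$ one only gets $r\lesssim\delta(\cos\theta)^{-2/(n-1)}$, so cutting at $\cos\theta\sim\epsilon$ leaves an "away" contribution of order $\delta\epsilon^{-2/(n-1)}$ and a "near" contribution of order $\epsilon^2/\lambda$, and no choice of $\epsilon$ makes both $O(\delta)$ for small $n$. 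So the assertion "the net effect being $O(r_\delta(T\delta))=O(\delta)$" is exactly the missing step, as you yourself suspected.

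What the paper uses instead, and what your argument lacks, is a Gronwall-type integrating-factor estimate in the variable $r$: from $-\frac{x''}{r'}\geq(\frac{n-1}{r}-r)x'$ one integrates to get
$x'(s(r))\leq\bigl(\frac{r_T}{r}\bigr)^{n-1}e^{\frac{r^2-r_T^2}{2}}x'(s(r_T))$,
which records that $x'$ decays like $(r_T/r)^{n-1}$ as $r$ increases; combined with $x'(s(r_T))\leq\frac12$ (so $\frac{x'}{r'}\leq 2x'$) this gives $f_\delta(r)-f_\delta(r_T)\leq 2\int_{r_T}^{\sqrt{n-1}}(\frac{r_T}{r})^{n-1}e^{\frac{r^2-r_T^2}{2}}\,dr=O(r_T)=O(\delta)$ (for $n\geq 3$; for $n=2$ one picks up a harmless logarithm). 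Your pointwise lower bound on $\kappa$ discards precisely this decay of $x'$, which is why the estimate degenerates at the vertical tangent. To complete your proof you would need to import this decay estimate (or an equivalent one in the $\theta$-parametrization), at which point you are essentially reproducing the paper's argument.
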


\begin{proof}
For $0<s<s_1=s_1(\delta)$, we have $r^{\prime}>0$, so that $\gamma_\delta$ can
be written as  a graph of $x=f_{\delta}(r)$.
From \eqref{eq:9-4-3}, we have
\begin{equation}\label{eq:10-5-1}
\aligned
-\dfrac{x^{\prime\prime}}{r^{\prime}}&=xr^{\prime}+(\dfrac{n-1}{r}-r)x^{\prime}+\lambda\\
                                        &\geq(\dfrac{n-1}{r}-r)x^{\prime}
\endaligned
\end{equation}
because of $x>0$, $\lambda>0$ and $r^{\prime}>0$.

\noindent
Letting $r_T=r_\delta(T\delta)$ and for $r$ satisfying $x^{\prime}(\widetilde{r})>0$ if  $r_T<\widetilde{r}<r$,
integrating  \eqref{eq:10-5-1} from $r_T$  to  $r>r_T$,
we have
\begin{equation}
\dfrac{x^{\prime}(s(r_T))}{x^{\prime}(s(r))}\geq e^{\frac{r_T^2-r^2}{2}}(\dfrac{r}{r_T})^{n-1},
\end{equation}
i.e., for all $r$ such that $x^{\prime}>0$ on $(r_T, r)$, we have
\begin{equation}\label{eq:9-17-11}
x^{\prime}(s(r))\leq (\dfrac{r_T}{r})^{n-1}e^{\frac{r^2-r_T^2}{2}}x^{\prime}(s(r_T)).
\end{equation}
We will prove that the above $r$ satisfies

Claim: $r\leq\sqrt{n-1}$ if $\delta$ is small enough.

\noindent
If   $r(s_1)\leq\sqrt{n-1}$, then we know that
  this result is obvious because of  $r^{\prime}(s)>0$.

\noindent
We only need to consider the case of  $r(s_1)>\sqrt{n-1}$.
In this case, there exists an  $s_3$ such that $0<s_3<s_1$ and $r(s_3)=\sqrt{n-1}$.

\noindent
If $x^{\prime}(s_3)\leq0$, we have  $x^{\prime}(s)<0$ for $s_3<s<s_1$.
Hence, $r<r(s_3)=\sqrt{n-1}$ holds because of $x^{\prime}(r)>0$.

\noindent
If $x^{\prime}(s_3)>0$,
  for $s\in (T\delta, s_3)$, $r^{\prime}(s)>0$ holds.  One has
$0<r(s)<\sqrt{n-1}$ and
\begin{equation}\label{eq:12-4-5}
 \kappa=-\dfrac{x^{\prime\prime}}{r^{\prime}}=xr^{\prime}+\bigl(\dfrac{n-1}{r}-r\bigl)x^{\prime}+\lambda\geq\lambda
 \end{equation}
because of  $r(s_3)=\sqrt{n-1}$.
By integrating  \eqref{eq:12-4-5} from $s=T\delta$ to $s_3$, we obtain
\begin{equation}
\int^{s_3}_{T\delta}\kappa ds\geq\lambda (s_3-T\delta)\geq\lambda (r(s_3)-r(T\delta))\geq\lambda(\sqrt{n-1}-r_T).
\end{equation}
Here we use that the length of the profile curve $\gamma(s)$
 from the point $\gamma(T\delta)$ to the point $\gamma(s_3)$  is not less than the Euclidean distance between these two points.
Therefore, we have
\begin{equation}\label{eq:12-5-1}
r_T\geq\sqrt{n-1}-\dfrac{1}{\lambda}\int^{s_3}_{T\delta}\kappa ds.
\end{equation}
If $\lambda>\frac{\pi}{3\sqrt{n-1}}$, we have
$$
\dfrac{\pi}{2}-\lambda\dfrac{\sqrt{n-1}}{2}<\dfrac{\pi}3.
$$
If $\lambda\leq\frac{\pi}{3\sqrt{n-1}}$, then we have
$$
\dfrac{\pi}{2}-\lambda\dfrac{\sqrt{n-1}}{2}\geq \dfrac{\pi}3.
$$
Since the integral
$$
\int^{s_3}_{T\delta}\kappa ds
$$
measures the total rotation of the tangent vector of $\gamma_{\delta}$ from $T\delta$ to $s_3$, from \eqref{eq:10-6-1}, we know

$$
\int^{s_3}_{T\delta}\kappa ds\leq \dfrac{\pi}2-\max\{\dfrac{\pi}3, (\dfrac{\pi}2-\lambda\dfrac{\sqrt{n-1}}2\}.
$$
Thus,  for $\lambda>\frac{\pi}{3\sqrt{n-1}}$, we obtain  from \eqref{eq:12-5-1}
\begin{equation}
r_T\geq\sqrt{n-1}-\dfrac{1}{\lambda}\int^{s_3}_{T\delta}\kappa ds\geq\sqrt{n-1}-\dfrac{1}{\lambda}(\frac{\pi}{2}-\frac{\pi}{3})
\geq\frac{\sqrt{n-1}}{2}.
\end{equation}
It is impossible because  $r_T=r(T\delta)=O(\delta)$ is very small.

\noindent
For  $\lambda\leq\frac{\pi}{3\sqrt{n-1}}$,  we have from \eqref{eq:12-5-1}
\begin{equation}
r_T\geq\sqrt{n-1}-\dfrac{1}{\lambda}\int^{s_3}_{T\delta}\kappa ds\geq\sqrt{n-1}-\dfrac{1}{\lambda}(\frac{\pi}{2}-
(\dfrac{\pi}{2}-\lambda\dfrac{\sqrt{n-1}}{2}))
=\frac{\sqrt{n-1}}{2}.
\end{equation}
It is also  impossible because of  $r_T=r(T\delta)=O(\delta)$.

\noindent
From the above arguments, we complete the proof of the claim.

\noindent
By a direct calculation, we have from \eqref{eq:9-17-11} and  \eqref{eq:10-6-1}

\begin{equation}
x^{\prime}(s(r))\leq x^{\prime}(s(\delta_T))\leq \dfrac{1}{2}
\end{equation}
for $r\in (r_T, \sqrt{n-1})$ with $x^{\prime}>0$ on $(r_T, r)$
since
$$
(\dfrac{r_T}{r})^{n-1}e^{\frac{r^2-r_T^2}{2}}
$$
is a decreasing function of $r$ in $(r_T, \sqrt{n-1})$.
Hence, for $r\in (r_T, \sqrt{n-1})$ one has
\begin{equation}\label{eq:10-6-2}
\dfrac{x^{\prime}}{r^{\prime}}\leq2x^{\prime}.
\end{equation}
Since $\dfrac{dx}{dr}=f_\delta^{\prime}(r)=\dfrac{x^{\prime}}{r^{\prime}}$,
we have from \eqref{eq:9-17-11} and \eqref{eq:10-6-2} that

\begin{equation}
\int_{r_T}^r f_\delta^{\prime}(r) dr=\int_{r_T}^r \dfrac{x^{\prime}}{r^{\prime}} dr\leq 2\int_{r_T}^r x^{\prime}dr
\leq 2\int_{r_T}^r(\dfrac{r_T}{r})^{n-1}e^{\frac{r^2-r_T^2}{2}}x^{\prime}(s(r_T))dr.
\end{equation}
Hence, we obtain
\begin{equation}\label{eq:9-17-12}
\begin{aligned}
f_\delta(r)&\leq f_\delta(r_T)+2\int_{r_T}^r(\dfrac{r_T}{r})^{n-1}e^{\frac{r^2-r_T^2}{2}}dr\\
&\leq f_\delta(r_T)+c_2\int_{r_T}^r(\dfrac{r_T}{r})^{n-1}dr\leq c_3r_T\leq C_1\delta
\end{aligned}
\end{equation}
if $\delta>0$ is small enough,  where $c_2$, $c_3$ and $C_1$ are constants.

\noindent
Since $x_\delta(s)$ gets its maximum at $x^{\prime}(s)=0$, we conclude from \eqref{eq:9-17-12} that
\begin{equation}
0\leq x_\delta(s)\leq C_1\delta, \ \ \ \ \ \ \  {\mbox{for}}\ 0<s<s_1.
\end{equation}
If  $x^{\prime}(s_3)=0$ at $s=s_3$, then  we have $x^{\prime}(s)<0$ for  $s_3<s<s_1$.
According to the argument in
the above claim, we know   $r(s_3)\leq\sqrt{n-1}$ as long as $\delta>0$ is small enough.
If there exists $s_3<s_2<s_1$ such that $r(s_2)=\sqrt{n-1}$, then $r(s)>\sqrt{n-1}$
and $x^{\prime}(s)<0$ for $s_2<s<s_1$.
Hence, we have
 $$
 (\frac{n-1}{r(s)}-r(s))x^{\prime}(s)>0
 $$
 and
\begin{equation}\label{eq:12-6-1}
\kappa=-\dfrac{x^{\prime\prime}}{r^{\prime}}=xr^{\prime}+\bigl(\dfrac{n-1}{r}-r\bigl)x^{\prime}+\lambda>\lambda,
\end{equation}
 for $s_2<s<s_1$.
By integrating \eqref{eq:12-6-1} from $s=s_2$ to $s_1$, we have
\begin{equation}
\frac{\pi}{2}\geq \int^{s_1}_{s_2}\kappa ds\geq \lambda (s_1-s_2)\geq \lambda (r(s_1)-r(s_2))
= \lambda (r(s_1)-\sqrt{n-1}),
\end{equation}
that is,
\begin{equation}
r_\delta(s_1)\leq\sqrt{n-1}+\frac{\pi}{2\lambda}.
\end{equation}
Hence, we get  that $x_\delta(s)$ and $ r_\delta(s)$ are bounded.  We know $s_1(\delta)<\infty$.
This finishes  the proof of the lemma 3.1.
\end{proof}

\begin{lemma}\label{lemma 3.2}
For $\delta>0$ small enough, one has $x_\delta(s_1)=0$.
\end{lemma}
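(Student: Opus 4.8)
\noindent The plan is to argue by contradiction. Suppose $x_\delta(s_1)>0$ for arbitrarily small $\delta>0$. By Definition~\ref{definition 2.1} the unit tangent of $\gamma_\delta$ at $s_1$ is then $(1,0)$ or $(-1,0)$, so $r_\delta'(s_1)=0$. Write $(x_\delta',r_\delta')=(\cos\theta,\sin\theta)$ with $\theta=\theta(s)$ continuous and $\theta(0)=0$. From \eqref{eq:9-4-2} and \eqref{eq:9-16-12} one gets $x''=-r'P$ and $r''=x'P$ with $P:=xr'+(\tfrac{n-1}{r}-r)x'+\lambda$, hence $\theta'=x'r''-r'x''=P$; thus $\theta'=\kappa$ holds everywhere (the expression $P$ is continuous, so there is no $0/0$ problem at $s_1$). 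Since $r_\delta'>0$ on $(0,s_1)$ and the curve bends upward at $s=0$, i.e.\ $\kappa(0)=\tfrac{n-1}{\delta}-\delta+\lambda>0$ for small $\delta$, the angle $\theta$ leaves $0$ increasing and then stays in $(0,\pi)$ on $(0,s_1)$, with $\theta(s_1)\in\{0,\pi\}$.

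\noindent The first step is to bound $r_\delta(s_1)$ below, uniformly in $\delta$, using the sign of $\kappa(s_1)$. If $\theta(s_1)=\pi$, then $\theta<\pi$ on $(0,s_1)$ forces the one-sided derivative $\theta'(s_1)=\kappa(s_1)\ge0$; with $x_\delta'(s_1)=-1$, $r_\delta'(s_1)=0$ this reads $r_\delta(s_1)^2+\lambda r_\delta(s_1)-(n-1)\ge0$, i.e.\ $r_\delta(s_1)\ge t_+:=\tfrac12\bigl(\sqrt{\lambda^2+4(n-1)}-\lambda\bigr)$. If $\theta(s_1)=0$, then $\theta>0$ on $(0,s_1)$ forces $\kappa(s_1)\le0$, which with $x_\delta'(s_1)=1$ gives the stronger bound $r_\delta(s_1)\ge\tfrac12\bigl(\lambda+\sqrt{\lambda^2+4(n-1)}\bigr)$. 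Either way $r_\delta(s_1)\ge t_+>0$, a constant depending only on $n$ and $\lambda$, while Lemma~\ref{lemma 2} gives $r_\delta(s_1)\le\sqrt{n-1}+\tfrac{\pi}{2\lambda}$; since $|r_\delta'|\le1$, on any sub-arc $(s_4,s_1)$ with $s_1-s_4\le t_+/2$ one then has $r_\delta\ge t_+/2$, and hence (using also $x_\delta\le C_1\delta\le 1$ from Lemma~\ref{lemma 2}) $|\kappa|\le K:=1+\tfrac{2(n-1)}{t_+}+\sqrt{n-1}+\tfrac{\pi}{2\lambda}+\lambda$ there.

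\noindent The contradiction then comes from a thin-strip estimate near $s_1$. Take first $\theta(s_1)=\pi$ and let $s_4<s_1$ be the \emph{last} parameter with $\theta(s_4)=\tfrac{2\pi}{3}$ (it exists since $\theta$ runs from $0$ to $\pi$); then $\theta\in(\tfrac{2\pi}{3},\pi)$ on $(s_4,s_1)$, so $x_\delta'=\cos\theta\le-\tfrac12$, and integrating with $x_\delta\le C_1\delta$ gives $s_1-s_4\le 2x_\delta(s_4)\le 2C_1\delta$. On the other hand $\int_{s_4}^{s_1}\kappa\,ds=\theta(s_1)-\theta(s_4)=\tfrac{\pi}{3}$, so $\kappa(\bar s)=\tfrac{\pi/3}{s_1-s_4}\ge\tfrac{\pi}{6C_1\delta}$ for some $\bar s\in(s_4,s_1)$; but $r_\delta(\bar s)\ge r_\delta(s_1)-2C_1\delta\ge t_+/2$ for $\delta$ small, so $|\kappa(\bar s)|\le K$, which is absurd once $\delta<\tfrac{\pi}{6C_1K}$. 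The case $\theta(s_1)=0$ is the mirror image: here $x_\delta'$ has no zero in $(0,s_1)$ — since $f_\delta$ has at most one critical point and $x_\delta'(s_1)=1>0$ — so $x_\delta'>0$ and $\theta\in(0,\tfrac{\pi}{2})$ throughout; by \eqref{eq:10-6-1} $\theta$ reaches $\tfrac{\pi}{3}$, so one takes $s_4$ the last parameter with $\theta(s_4)=\tfrac{\pi}{3}$, finds $x_\delta'\ge\tfrac12$ and $s_1-s_4\le 2x_\delta(s_1)\le 2C_1\delta$ on $(s_4,s_1)$, and the same curvature estimate applies. Hence $x_\delta(s_1)>0$ is impossible, which proves the lemma.

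\noindent I expect the genuinely delicate point to be precisely this last step — ruling out a horizontal tangent at $s_1$. A horizontal tangent there would force $\gamma_\delta$ to cross most of the strip $\{0<x<C_1\delta\}$ within arc length $O(\delta)$, hence force arbitrarily large curvature at some $\bar s$ near $s_1$; contradicting this requires $|\kappa|$ to be bounded near $s_1$, which is not available a priori because $r_\delta(s_1)$ could conceivably degenerate, and it is exactly the sign analysis of $\kappa(s_1)$ in the first step that supplies the uniform lower bound $r_\delta(s_1)\ge t_+$ needed to close the argument.
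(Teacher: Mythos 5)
Your proof is correct, but it takes a genuinely different route from the paper's, so a comparison is worth recording. The paper argues by compactness along a sequence $\delta_m\to 0^+$: from Lemma \ref{lemma 2} the graphs $x=f_m(r)$ satisfy $0<f_m\le C_1\delta_m\to 0$, the authors assert that consequently $f_m'\to 0$ on compact $r$-intervals, and then the graph equation \eqref{eq:9-10-11} forces $f_m''\to-\lambda<0$, so $f_m'$ eventually decreases at a definite rate on an interval of fixed length, which is incompatible with $f_m'\to 0$. Your argument instead works with a single small $\delta$ and is purely quantitative: you first extract, from the sign of the one-sided derivative $\theta'=\kappa$ at the terminal horizontal tangency, the uniform lower bound $r_\delta(s_1)\ge t_+:=\tfrac12\bigl(\sqrt{\lambda^2+4(n-1)}-\lambda\bigr)$, which keeps $|\kappa|$ uniformly bounded on the final arc; you then observe that turning the tangent by the fixed angle $\tfrac{\pi}{3}$ inside the strip $\{0<x\le C_1\delta\}$ while $|x'|\ge\tfrac12$ can only happen on an arc of length at most $2C_1\delta$, forcing curvature of size $\delta^{-1}$ somewhere, a contradiction. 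What your version buys: it avoids the paper's weakest step (the unproved passage from $f_m\to0$ in $C^0$ to $f_m'\to0$, which itself would need an a priori derivative or curvature bound), and it yields the explicit estimate $r_\delta(s_1)\ge t_+$ as a by-product. Two small points you should make explicit: (i) that $x_\delta(s_1)>0$ really entails a horizontal tangent at $s_1$, rather than $s_1=S(\delta)$ with neither event occurring, follows from $s_1(\delta)<\infty$ in Lemma \ref{lemma 2} together with the boundedness of $(x,r,\theta)$ and of the curvature on $\{r\ge\delta\}$, which lets the solution continue past any time at which neither event has happened; (ii) in the case $\theta(s_1)=0$ you could conclude immediately, since $x'>0$ up to $s_1$ places you inside the Claim in the proof of Lemma \ref{lemma 2}, giving $r_\delta(s_1)\le\sqrt{n-1}$, which already contradicts your lower bound $r_\delta(s_1)\ge\tfrac12\bigl(\lambda+\sqrt{\lambda^2+4(n-1)}\bigr)>\sqrt{n-1}$.
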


\begin{proof}
If not, there exists a sequence $\delta_m\rightarrow 0^{+}$ for which $x_{\delta_m}(s)>0$.
Putting  $f_m(r)=f_{\delta_m}(r)$,
the function $x_{\delta_m}=f_m(r)$ is  defined for $\delta_m<r<\sqrt{n-1}+\frac{\pi}{2\lambda}$
and  satisfies
\begin{equation}\label{eq:9-10-11}
\dfrac{f_m^{\prime\prime}(r)}{1+(f_m^{\prime}(r))^2}+(\dfrac{n-1}{r}-r)f_m^{\prime}(r)
+f_m(r)+\dfrac{\lambda}{\sqrt{1+(f_m^{\prime}(r))^2}}=0.
\end{equation}
From the lemma \ref{lemma 2}, we get that $f_m(r)$ satisfies
$0<f_m(r)=x_{\delta_m}(r)\leq C_1 \delta_m\rightarrow0$.
Thus,  we know that $\gamma_{\delta_m}=(x_{\delta_m}, r_{\delta_m})$ gets
close to the $r$-axis, its tangents also must converge to the $r$-axis.
Hence, we have $x_{\delta_m}(r)=f_m(r)$ and $f_m^{\prime}(r)$ converge to zero on compact intervals.

\noindent
On the other hand, from \eqref{eq:9-10-11}, we have
\begin{equation}
f_m^{\prime\prime}(r)\rightarrow-\lambda<0.
\end{equation}
Therefore,  for $\delta_m>0$ small enough,
\begin{equation}
f_m^{\prime\prime}(r)<-\dfrac{\lambda}{2}<0,
\end{equation}
$f_m^{\prime}(r)$ is a monotone decreasing function. This is impossible. Hence, there exists $\delta>0$ small enough  such that $x_\delta(s_1)=0$.
\end{proof}

\noindent
According to the lemma \ref{lemma 2} and the lemma \ref{lemma 3.2}, we know that there exists  $\delta_0>0$ small enough  such that  $x_{\delta_0}(s_1)=0$ and $s_1(\delta_0)<\infty$.
Since  solutions of  \eqref{eq:9-4-3} depend smoothly on the initial value, we define $\delta^{*}$ as the following

\begin{definition}\label{definition 11-3-1}
\begin{equation}
\delta^{*}=\sup\{\delta>0:   x_{\delta}(s_1)=0 \ {\mbox{and}} \ s_1=s_1({\delta})<\infty \}.
\end{equation}

\end{definition}

\begin{lemma}
\begin{equation}
\sup\limits_{\delta_0<\delta<\delta^{*}} r_\delta(s_1)<+\infty.
\end{equation}
\end{lemma}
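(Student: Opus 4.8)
The plan is to reduce everything to a uniform bound on the total arc length $s_1=s_1(\delta)$. Writing the unit tangent of the profile curve $\gamma_\delta$ as $(x',r')=(\cos\theta,\sin\theta)$, its signed curvature is $\kappa=\theta'$ and the profile equation \eqref{eq:9-4-3} becomes $\kappa=xr'+\bigl(\tfrac{n-1}{r}-r\bigr)x'+\lambda$. By Definition \ref{definition 2.1}, $r_\delta'=\sin\theta>0$ on $(0,s_1)$, and $r_\delta(0)=\delta<R_0:=\tfrac{\lambda+\sqrt{\lambda^2+4(n-1)}}{2}$; hence $r_\delta(s_1)=\delta+\int_0^{s_1}r'\,ds\le\delta+s_1<R_0+s_1$, and it is enough to show $s_1(\delta)\le\pi/\lambda$ for all $\delta\in(\delta_0,\delta^*)$.

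The main step is to integrate $\kappa=\theta'$ over $[0,s_1]$:
\[
\theta(s_1)-\theta(0)\;=\;\int_0^{s_1}xr'\,ds\;+\;\int_0^{s_1}\Bigl(\tfrac{n-1}{r}-r\Bigr)x'\,ds\;+\;\lambda s_1 .
\]
Since $r_\delta'>0$ on $(0,s_1)$, the curve is a graph $x=f_\delta(r)\ge 0$ over $r\in[\delta,r_\delta(s_1)]$ with $f_\delta(\delta)=0$; thus $\int_0^{s_1}xr'\,ds=\int_\delta^{r_\delta(s_1)}f_\delta\,dr\ge 0$, and an integration by parts gives $\int_0^{s_1}\bigl(\tfrac{n-1}{r}-r\bigr)x'\,ds=x_\delta(s_1)\bigl(\tfrac{n-1}{r_\delta(s_1)}-r_\delta(s_1)\bigr)+\int_\delta^{r_\delta(s_1)}\bigl(\tfrac{n-1}{r^2}+1\bigr)f_\delta\,dr$. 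When $x_\delta(s_1)=0$ the boundary term disappears and the right-hand side is $\ge\lambda s_1$; meanwhile $\theta(0)=0$ and, by Definition \ref{definition 2.1}, $\theta$ cannot leave $(0,\pi)$ on $(0,s_1)$ (reaching $0$ or $\pi$ would be a horizontal tangent, which stops the curve there), so $\theta(s_1)\le\pi$. Hence $\lambda s_1\le\pi$ — in particular $s_1(\delta)<\infty$ — and $r_\delta(s_1)<R_0+\pi/\lambda$, uniformly in such $\delta$.

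It remains to treat $\delta\in(\delta_0,\delta^*)$ for which $\gamma_\delta$ stops at $s_1$ with a horizontal tangent and $x_\delta(s_1)>0$ (so the boundary term above has the wrong sign), and to rule out $s_1(\delta)=\infty$. Here I would use a direct geometric estimate: wherever $x'\le 0$ and $r\ge\sqrt{n-1}$, both $xr'$ and $\bigl(\tfrac{n-1}{r}-r\bigr)x'$ are $\ge 0$, so $\kappa\ge\lambda$; applying this to the arc of $\gamma_\delta$ from its apex $\bar s$ (the unique vertical-tangent point) to $s_1$, whose turning is $\le\pi/2$ by Definition \ref{definition 2.1}, bounds that arc's length by $\pi/(2\lambda)$, so $r_\delta(s_1)\le\max\{\sqrt{n-1},\,r_\delta(\bar s)\}+\pi/(2\lambda)$. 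Bounding the apex height $r_\delta(\bar s)$ is the crux: the estimate $\kappa\ge\lambda$ also holds on the ascending arc while $r\le\sqrt{n-1}$, forcing $\sqrt{n-1}-\delta\le\pi/(2\lambda)$ and confining a large apex to a compact $\delta$-range, and a compactness argument — assuming $\delta_m\in(\delta_0,\delta^*)$ with $r_{\delta_m}(s_1)\to\infty$, passing to $\delta_m\to\bar\delta\in[\delta_0,\delta^*]$, and using the smooth dependence of \eqref{eq:9-4-3} on $\delta$ — excludes both unbounded apex heights and escape to infinity. I expect this last step to be the main obstacle.
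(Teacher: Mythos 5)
Your core argument is correct but takes a genuinely different route from the paper, and for the case it actually covers it is cleaner and more quantitative. For $\delta$ with $x_\delta(s_1)=0$ you integrate the curvature identity $\theta'=\kappa=xr'+\bigl(\tfrac{n-1}{r}-r\bigr)x'+\lambda$ over the whole arc, integrate the middle term by parts so that (using $x=f_\delta\ge 0$, $r'>0$, and the vanishing of both boundary terms when $x(0)=x(s_1)=0$) everything except $\lambda s_1$ is nonnegative, and conclude $\lambda s_1\le\theta(s_1)-\theta(0)\le\pi$; hence $s_1\le\pi/\lambda$ and $r_\delta(s_1)\le\delta+\pi/\lambda$ with an explicit constant and no limiting argument. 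The paper instead argues by contradiction along a sequence $\delta_m\to\delta^{*}$ with $r_{\delta_m}(s_1)\to\infty$: it locates the vertical-tangent point $s_3$, shows $r_{\delta_m}(s_3)\to\infty$, exploits the largeness of $-\bigl(\tfrac{n-1}{r}-r\bigr)x'$ past an intermediate point $s_5$ to get $\max x_{\delta_m}=x_{\delta_m}(s_3)\le\bigl(2\pi+\tfrac1\lambda\bigr)r_{\delta_m}(s_3)^{-1/2}\to 0$, and derives a contradiction with the convergence of $\gamma_{\delta_m}$ to $\gamma_{\delta^{*}}$, which is not the $r$-axis; your version avoids all of this machinery. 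Concerning the residual case that worries you ($x_\delta(s_1)>0$ with a horizontal tangent, where your boundary term $x(s_1)\bigl(\tfrac{n-1}{r(s_1)}-r(s_1)\bigr)$ can be negative): the paper's own proof also begins by invoking $x_{\delta_m}(s_1)=0$ via the mean value theorem, i.e.\ it too only treats parameters drawn from the set $\{\delta:\ x_\delta(s_1)=0,\ s_1(\delta)<\infty\}$, and in the proof of the theorem the lemma is only ever applied along such sequences approaching $\delta^{*}$. So your argument already covers everything the paper's does, and the compactness step you flagged as the main obstacle is not needed for the intended application (though, strictly read, the lemma's supremum over all of $(\delta_0,\delta^{*})$ is left incomplete by both arguments).
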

\begin{proof}
From the lemma \ref{lemma 3.2}, we know $r_\delta(s_1)$ is bounded if $\delta>0$ is small enough.
If $r_{\delta_m}(s_1)\rightarrow +\infty$ for some sequence $\delta_m\rightarrow \delta^{*}$, we will
prove that it is impossible. In fact, according to the mean value theorem,
there exists an $s_3$ such that $0<s_3<s_1$ and $x_{\delta_m}^{\prime}(s_3)=0$
because of  $x_{\delta_m}(s_1)=0$.
We should remark that $s_1$ and $s_3$ depend on $\delta_m$.
Furthermore,  $x_{\delta_m}^{\prime}(s)<0$ for $s_3<s<s_1$
and we have
$$
\dfrac{n-1}{r_{\delta_m}}-r_{\delta_m}<0,
$$
for $s_3<s<s_1$.
Otherwise, there exists an $s_4$ such that $r_{\delta_m}(s_4)=\sqrt{n-1}$ with $s_4>s_3$.
Thus,  $x_{\delta_m}^{\prime}(s)<0$ and $\frac{n-1}{r_{\delta_m}}-r_{\delta_m}<0$
for $s_4<s<s_1$. From the equations \eqref{eq:9-4-3}, we have
\begin{equation}\label{eq:10-31-1}
\kappa=-\dfrac{x^{\prime\prime}_{\delta_m}}{r^{\prime}_{\delta_m}}
=x_{\delta_m}r^{\prime}_{\delta_m}
+\bigl(\dfrac{n-1}{r_{\delta_m}}-r_{\delta_m}\bigl)x^{\prime}_{\delta_m}+\lambda>\lambda.
\end{equation}
By integrating \eqref{eq:10-31-1} from $s_4$ to $s_1$, we get
\begin{equation}
\begin{aligned}
\dfrac{\pi}{2}&\geq\int_{s_4}^{s_1}\kappa ds\geq \lambda\bigl(s_1-s_4\bigl)\\
&\geq\lambda\bigl(r_{\delta_m}(s_1)-r_{\delta_m}(s_4)\bigl)\\
&=\lambda\bigl(r_{\delta_m}(s_1)-\sqrt{n-1}\bigl).
\end{aligned}
\end{equation}
Hence, we have
\begin{equation}
r_{\delta_m}(s_1)\leq \frac{\pi}{2\lambda}+\sqrt{n-1}.
\end{equation}
This is impossible since $r_{\delta_m}(s_1)\rightarrow +\infty$.
Hence,
$$
\frac{n-1}{r_{\delta_m}}-r_{\delta_m}<0
$$ for $s_3<s<s_1$.
For $s_3<s<s_1$, we have
\begin{equation}\label{eq:12-5-2}
\kappa=-\dfrac{x^{\prime\prime}_{\delta_m}}{r^{\prime}_{\delta_m}}
=x_{\delta_m}r^{\prime}_{\delta_m}+\bigl(\dfrac{n-1}{r_{\delta_m}}-r_{\delta_m}\bigl)x^{\prime}_{\delta_m}
+\lambda>\lambda.
\end{equation}
Integrating \eqref{eq:12-5-2} from $s=s_3$ to $s_1$, we get
\begin{equation}
\dfrac{\pi}{2}\geq\int_{s_3}^{s_1}\kappa ds
\geq \lambda\bigl(s_1-s_3\bigl)\geq\lambda\bigl(r_{\delta_m}(s_1)-r_{\delta_m}(s_3)\bigl).
\end{equation}
Hence,
\begin{equation}
r_{\delta_m}(s_1)\leq \frac{\pi}{2\lambda}+r_{\delta_m}(s_3).
\end{equation}
We have
\begin{equation}\label{eq:10-31-4}
r_{\delta_m}(s_3)\rightarrow +\infty
\end{equation}
because of  $r_{\delta_m}(s_1)\rightarrow +\infty$.

\noindent
Since $x_{\delta_m}^{\prime}(s_3)=0$ and $x_{\delta_m}(s_1)=0$ hold, from
$r_{\delta_m}(s_3)\rightarrow +\infty$, for some $\delta_m$, which is very near $\delta^{*}$,
we know that there exists an $s_5$ with $s_3<s_5<s_1$
such that
\begin{equation}
x_{\delta_m}^{\prime}(s_5)=-\sin(\dfrac{1}{\sqrt{r_{\delta_m}(s_3)}}).
\end{equation}
If we integrate \eqref{eq:12-5-2} from $s=s_3$ to $s_5$,  we  obtain

\begin{equation}\label{eq:11-3-10}
\dfrac{1}{\sqrt{r_{\delta_m}(s_3)}}=\int_{s_3}^{s_5}\kappa ds\geq \lambda(s_5-s_3).
\end{equation}
Since $r_{\delta_m}(s_3)\rightarrow +\infty$ holds,
\begin{equation}\label{eq:12-5-3}
\mid x^{\prime}(s_5)\mid=\sin \dfrac{1}{\sqrt{r_{\delta_m}(s_3)}}
>\dfrac{1}{2\sqrt{r_{\delta_m}(s_3)}}>\dfrac{1}{2\sqrt{r_{\delta_m}(s_5)}}
\end{equation}
yields

\begin{align}\label{eq:10-31-2}
\bigl(\dfrac{n-1}{r_{\delta_m}(s)}-r_{\delta_m}(s)\bigl)x^{\prime}_{\delta_m}(s)&\geq\bigl(\dfrac{n-1}{r_{\delta_m}(s_5)}-r_{\delta_m}(s_5)\bigl)x^{\prime}_{\delta_m}(s_5)\notag\\
             &>\dfrac{n-1}{r_{\delta_m}(s_5)}x^{\prime}_{\delta_m}(s_5)+\dfrac{1}{2}\sqrt{r_{\delta_m}(s_5)}\\
             &> \dfrac{1}{4}\sqrt{r_{\delta_m}(s_5)}.\notag
\end{align}
for $s_5\leq s\leq s_1$ since $r_{\delta_m}(s_5)\rightarrow +\infty$.

\noindent
From the equations \eqref{eq:9-4-3} and \eqref{eq:10-31-2}, we have
\begin{equation}\label{eq:10-31-3}
\kappa=-\dfrac{x^{\prime\prime}_{\delta_m}}{r^{\prime}_{\delta_m}}=x_{\delta_m}r^{\prime}_{\delta_m}+\bigl(\dfrac{n-1}{r_{\delta_m}}-r_{\delta_m}\bigl)x^{\prime}_{\delta_m}+\lambda>\dfrac{1}{4}\sqrt{r_{\delta_m}(s_5)}
\end{equation}
as $r_{\delta_m}\rightarrow \infty$.
Integrating  \eqref{eq:10-31-3} from $s=s_5$ to $s_1$, we have
\begin{equation}\label{eq:11-3-11}
\frac{\pi}{2}>\int_{s_5}^{s_1}\kappa ds>\frac{1}{4}\sqrt{r_{\delta_m}(s_5)}(s_1-s_5).
\end{equation}
Thus, one obtains from \eqref{eq:11-3-10} and  \eqref{eq:11-3-11}

\begin{align}\label{eq:10-31-5}
\max (x_{\delta_m})&=x_{\delta_m}(s_3)=x_{\delta_m}(s_3)-x_{\delta_m}(s_1)\notag\\
             &\leq s_1-s_5+s_5-s_3\notag\\
             &\leq \dfrac{2\pi}{\sqrt{r_{\delta_m}(s_5)}}+\dfrac{1}{\lambda\sqrt{r_{\delta_m}(s_3)}}\\
             &\leq \biggl(2\pi+\dfrac{1}{\lambda}\biggl)\dfrac{1}{\sqrt{r_{\delta_m}(s_3)}}.\notag
\end{align}
We conclude from \eqref{eq:10-31-4} and \eqref{eq:10-31-5} that $\max (x_{\delta_m})=x_{\delta_m}(s_3)\rightarrow 0$ if $r_{\delta_m}(s_1)\rightarrow +\infty$.
Hence,  $\gamma_{\delta_m}=(x_{\delta_m}, r_{\delta_m})$ gets close to the $r$-axis, its tangents also must converge to the $r$-axis. It is impossible since $\gamma_{\delta_m}=(x_{\delta_m}, r_{\delta_m})$ converges to $\gamma_{\delta^{*}}$ which is not $r$-axis.
This finishes our proof.
\end{proof}

\section {An upper bound of $x_\delta(s)$}
\begin{lemma}
\begin{equation}
\sup\limits_{\delta_0<\delta<\delta^{*}} \sup_{0<s<s_1}x_\delta(s)\leq\frac{\pi}{2\lambda}.
\end{equation}
\end{lemma}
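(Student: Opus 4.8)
The plan is to bound, for each fixed $\delta\in(\delta_0,\delta^*)$, the quantity $\sup_{0<s<s_1}x_\delta(s)$ by a single piece of the arc length of the profile curve $\gamma_\delta$, and then to control that arc length by the curvature lower bound $\kappa\ge\lambda$. First I would collect what is already available for such $\delta$: by Definition \ref{definition 11-3-1}, $x_\delta(s_1)=0$ and $s_1=s_1(\delta)<\infty$; by the remarks following Definition \ref{definition 2.1}, $r_\delta'>0$ on $(0,s_1)$, and since $x_\delta(s_1)=0$ we also get $x_\delta>0$ on $(0,s_1)$. Thus $\gamma_\delta$ is a graph $x=f_\delta(r)$ over $[\delta,r_\delta(s_1)]$ with $f_\delta(\delta)=f_\delta(r_\delta(s_1))=0$ and $f_\delta>0$ in between; as was shown in Section 2, every critical point of $f_\delta$ is a strict maximum, so $f_\delta$ has exactly one critical point, i.e.\ there is a unique $s_3=s_3(\delta)\in(0,s_1)$ with $x_\delta'(s_3)=0$, and $x_\delta'>0$ on $(0,s_3)$, $x_\delta'<0$ on $(s_3,s_1)$. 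In particular $\sup_{0<s<s_1}x_\delta(s)=x_\delta(s_3)$. Writing the unit tangent as $(x_\delta',r_\delta')=(\cos\theta,\sin\theta)$, one has $\theta(0)=0$, $\theta(s_3)=\pi/2$ (since $x_\delta'(s_3)=0$, $r_\delta'(s_3)>0$), and $\theta(s_1)\in[\pi/2,\pi]$ (since $r_\delta'>0$ on $(0,s_1)$ forces $\theta\in(0,\pi)$ there, while $x_\delta'(s_1)\le0$); also, by \eqref{eq:12-4-2}, the integral of $\kappa$ over a subinterval is the corresponding increment of $\theta$.

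The heart of the argument is a case distinction on the value of $r_\delta(s_3)$. If $r_\delta(s_3)\le\sqrt{n-1}$, then since $r_\delta$ is increasing we have $r_\delta\le\sqrt{n-1}$ throughout $(0,s_3)$, hence $\frac{n-1}{r_\delta}-r_\delta\ge0$ there; together with $x_\delta>0$, $x_\delta'>0$, $r_\delta'>0$, the second equation in \eqref{eq:9-4-3} gives $\kappa=x_\delta r_\delta'+\bigl(\frac{n-1}{r_\delta}-r_\delta\bigr)x_\delta'+\lambda\ge\lambda$ on $(0,s_3)$, so $\lambda s_3\le\int_0^{s_3}\kappa\,ds=\theta(s_3)-\theta(0)=\pi/2$ and $x_\delta(s_3)=\int_0^{s_3}x_\delta'\,ds\le s_3\le\frac{\pi}{2\lambda}$. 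If instead $r_\delta(s_3)>\sqrt{n-1}$, then $r_\delta>\sqrt{n-1}$ throughout $(s_3,s_1)$, so $\frac{n-1}{r_\delta}-r_\delta<0$; since $x_\delta'<0$ on $(s_3,s_1)$, the middle term is again $\ge0$, so $\kappa\ge\lambda$ on $(s_3,s_1)$, whence $\lambda(s_1-s_3)\le\int_{s_3}^{s_1}\kappa\,ds=\theta(s_1)-\pi/2\le\pi/2$ and $x_\delta(s_3)=x_\delta(s_3)-x_\delta(s_1)=-\int_{s_3}^{s_1}x_\delta'\,ds\le s_1-s_3\le\frac{\pi}{2\lambda}$. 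In either case $x_\delta(s_3)\le\frac{\pi}{2\lambda}$, and taking the supremum over $\delta\in(\delta_0,\delta^*)$ finishes the proof.

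The one genuinely delicate point is to exhibit a subinterval on which all three summands of $\kappa=x_\delta r_\delta'+\bigl(\frac{n-1}{r_\delta}-r_\delta\bigr)x_\delta'+\lambda$ are simultaneously nonnegative: the term $x_\delta r_\delta'$ is always $\ge0$, but the sign of $\bigl(\frac{n-1}{r_\delta}-r_\delta\bigr)x_\delta'$ depends jointly on whether $r_\delta\lessgtr\sqrt{n-1}$ and on the sign of $x_\delta'$. This is precisely why the split is made at $r_\delta(s_3)$ and why the monotonicity $r_\delta'>0$ is essential: it keeps $r_\delta$ on one side of $\sqrt{n-1}$ throughout each of $(0,s_3)$ and $(s_3,s_1)$, so that on the appropriate one of these two subintervals the factors $\frac{n-1}{r_\delta}-r_\delta$ and $x_\delta'$ have the same sign. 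The remaining ingredients — uniqueness of the critical point of $f_\delta$ and the endpoint angle values $\theta(0)=0$, $\theta(s_3)=\pi/2$, $\theta(s_1)\le\pi$ — are immediate from the graph property established in Section 2 together with the initial data $(x_\delta'(0),r_\delta'(0))=(1,0)$.
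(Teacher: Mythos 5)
Your proof is correct and follows essentially the same route as the paper's: the same case split on whether $r_\delta(s_3)\le\sqrt{n-1}$ or $r_\delta(s_3)>\sqrt{n-1}$, the same observation that on the relevant subinterval the term $\bigl(\frac{n-1}{r}-r\bigr)x'$ is nonnegative so $\kappa\ge\lambda$, and the same integration of $\kappa$ against the total turning $\le\pi/2$ to bound the arc length and hence $x_\delta(s_3)$. Your write-up is in fact slightly more explicit than the paper's (e.g.\ justifying $x_\delta(s_3)\le s_3$ by $|x_\delta'|\le1$ rather than invoking the mean value theorem), but the argument is the same.
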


\begin{proof} For $\delta>0$,
letting  $x^{\prime}(s_3)=0$ with  $0<s_3<s_1$,   we have
\begin{equation}\label{eq:10-28-1}
x^{\prime}(s)>0  \ \ \ \ \ \ {\mbox{ for} \ 0<s<s_3},
\end{equation}
 \begin{equation}\label{eq:10-28-2}
 x^{\prime}(s)<0 \ \ \ \ \ \ {\mbox{for} \ s_3<s<s_1}.
 \end{equation}
If  $r(s_3)\leq\sqrt{n-1}$,  we see from $r^{\prime}(s)>0$ that $r(s)<\sqrt{n-1}$ for $0<s<s_3$.
Thus, it follows from  \eqref{eq:9-4-3} and \eqref{eq:10-28-1} that
\begin{equation}\label{eq:12-5-4}
\kappa=-\dfrac{x^{\prime\prime}}{r^{\prime}}=xr^{\prime}+\bigl(\dfrac{n-1}{r}-r\bigl)x^{\prime}+\lambda>\lambda.
\end{equation}
for $0<s<s_3$.

\noindent By integrating \eqref{eq:12-5-4} from $s=0$ to $s_3$, we have
\begin{equation}
\frac{\pi}{2}=\int_{0}^{s_3}\kappa ds
\geq s_3\lambda \geq\lambda x(s_3)=\lambda \sup\limits_{0<s<s_1} x_\delta(s).
\end{equation}
\begin{equation}\label{eq:12-5-6}
\sup\limits_{0<s<s_1} x_\delta(s)=x(s_3)\leq\frac{\pi}{2\lambda}.
\end{equation}
If $r(s_3)>\sqrt{n-1}$ for some $\delta>0$,
we have from $r^{\prime}(s)>0$ that $r(s)>\sqrt{n-1}$ for $s_3<s<s_1$.
Then it follows from  \eqref{eq:9-4-3} and \eqref{eq:10-28-2} that
\begin{equation}\label{eq:12-5-5}
\kappa=-\dfrac{x^{\prime\prime}}{r^{\prime}}=xr^{\prime}+\bigl(\dfrac{n-1}{r}-r\bigl)x^{\prime}+\lambda>\lambda
\end{equation}
for $s_3<s<s_1$.

\noindent By integrating  \eqref{eq:12-5-5}  from $s=s_3$ to $s_1$, we have, from the mean value theorem,
\begin{equation}\label{eq:12-5-7}
\frac{\pi}{2}\geq\int_{s_3}^{s_1}\kappa ds \geq \lambda (s_1-s_3)
\geq\lambda x(s_3)
=\lambda \sup\limits_{0<s<s_1} x_\delta(s).
\end{equation}
From \eqref{eq:12-5-6} and \eqref{eq:12-5-7}, we get, for any $\delta>0$,

\begin{equation*}
\sup\limits_{0<s<s_1} x_\delta(s)=x(s_3)\leq\frac{\pi}{2\lambda},
\end{equation*}
 that is,

\begin{equation*}
\sup_{\delta_0<\delta<\delta^{*}}\sup\limits_{0<s<s_1} x_\delta(s)\leq\frac{\pi}{2\lambda}.
\end{equation*}
This completes the proof of the lemma.
\end{proof}

\section {Proof of the theorem}
\begin{proof}

\noindent  From the above lemmas, we have found that the profile curve $\gamma_\delta(s)=(x_\delta,r_\delta)$ ($0\leq s\leq s_1$) stay away from the $x$-axis, and remain bounded as $\delta\rightarrow \delta^{*}$. Therefore, the limiting profile curve $\gamma^{*}(s)=\gamma_{\delta^{*}}(s)$ ($0\leq s\leq s_1$) begins and ends on the $r$-axis, i.e., from $(0,\delta^{*})$ to $(0,r^{*})$, where $r^{*}=r_{\delta^{*}}(s_1)$.

\noindent
We now claim that the profile curve has the horizontal tangent, that is, $x_{\delta^{*}}^{\prime}(s_1)=-1$. From the definition of $\delta^{*}$, we obtain that $x_{\delta^{*}}^{\prime}(s_1)\leq 1$. If $x_{\delta^{*}}^{\prime}(s_1)<1$, one can choose $\delta$ such that $\delta>\delta^{*}$ and near $\delta^{*}$. Then one can still obtain a profile curve $\gamma_\delta$, which, in the first quadrant, is a graph $x=f_\delta(r)$, and which hits the $r$-axis in finite time. This contradicts the definition  of $\delta^{*}$. Hence, the profile curve $\gamma^{*}$ has the horizontal tangent, that is, $x_{\delta^{*}}^{\prime}(s_1)=-1$.
We can get that the profile curve $\gamma$
obtained by reflecting $\gamma_{\delta^{*}}([0,s_1])$ in the $r$-axis is a simple and closed curve in the upper half plane. This finishes
our proof of the theorem 1.1.

\end{proof}

%
%\begin{figure}[ht]
%\centerline{\includegraphics[width=6cm]{11.eps}\includegraphics[width=6cm]{22.eps}}
%\caption{The two images show the graph of $\gamma$ when $n=10,\lambda=0.5$ and $n=10, \lambda=-0.5$, respectively.}
%\label{gf}
%\end{figure}
%

\end {document}